\numberwithin{equation}{section}
\begin{document}

\centerline{                   }

\title[Mann Iteration Process for Monotone Mappings]{Mann Iteration Process for Monotone  Nonexpansive Mappings with a Graph}
\author{ M.R. Alfuraidan /To appear in Georgian Mathematical Journal}

\address{Monther Rashed Alfuraidan\\Department of Mathematics \& Statistics \\
King Fahd University of Petroleum and Minerals\\
Dhahran 31261, Saudi Arabia.}
\email{monther@kfupm.edu.sa}
\subjclass[2010]{Primary 46B20, 47E10}

\subjclass[2010]{Primary 06F30, 46B20, 47E10}
\keywords{ Directed graph, fixed point, Mann iteration process, monotone mapping, nonexpansive mapping, Opial condition, uniformly convex Banach space, weakly connected graph.}
\maketitle

\newtheorem{theorem}{Theorem}[section]
\newtheorem{acknowledgement}{Acknowledgement}
\newtheorem{algorithm}{Algorithm}
\newtheorem{axiom}{Axiom}[section]
\newtheorem{case}{Case}
\newtheorem{claim}{Claim}
\newtheorem{conclusion}{Conclusion}
\newtheorem{condition}{Condition}
\newtheorem{conjecture}{Conjecture}
\newtheorem{corollary}{Corollary}[section]
\newtheorem{criterion}{Criterion}
\newtheorem{definition}{Definition}[section]
\newtheorem{example}{Example}[section]
\newtheorem{exercise}{Exercise}
\newtheorem{lemma}{Lemma}[section]
\newtheorem{notation}{Notation}
\newtheorem{problem}{Problem}
\newtheorem{proposition}{Proposition}[section]
\newtheorem{remark}{Remark}[section]
\newtheorem{solution}{Solution}
\newtheorem{summary}{Summary}

\thispagestyle{empty}

\begin{abstract}
Let $(X,\|.\|)$ be a Banach space.  Let $C$ be a nonempty,
bounded, closed, and convex subset of $X$ and $T: C \rightarrow C$
be a $G$-monotone nonexpansive mapping.  In this work, it is shown
that the Mann iteration sequence defined by
$$x_{n+1} = t_n T(x_n) + (1-t_n)x_n, \; n = 1, 2, \cdots$$
can be proved the existence of a fixed point of $G$-monotone nonexpansive
mappings.
\end{abstract}

\section{Introduction}
Banach's Contraction Principle \cite{banach} is remarkable in its simplicity, yet it is perhaps the most widely applied fixed point theorem in all of analysis. This is because the contractive condition on the mapping is simple and easy to test in a complete metric space, it finds almost canonical applications in the theory of differential and integral equations.  Over the years, many mathematicians successfully extended this fundamental theorem.  \\

 Nonexpansive mappings are those mappings which have Lipschitz constant equal to one.  Their investigation remain a popular area of research in various fields. In 1965, Browder \cite{browder} and G\"{o}hde \cite{gohde} independently proved that every nonexpansive selfmappings of a closed convex  and bounded subset of uniformly convex Banach space has a fixed point. This result was also obtained by Kirk \cite{kirk1965} under   slightly weaker assumptions. Since then several fixed point theorems for nonexpansive mappings in Banach spaces have been  derived \cite{khamsi-kirk}.\\

Recently a new direction has been developed when the Lipschitz condition is satisfied only for comparable elements in a partially
ordered metric space.  This direction was initiated by Ran and Reurings \cite{RR04} (see also \cite{NR-L05})
who proved an analogue of the classical Banach contraction principle in partially ordered metric spaces and by Jachymski \cite{Jachymski} in metric spaces with a graph .  The motivation of such new direction is the problem of the existence of a solution which is
positive.  In other words, the classical approaches only deal with the existence of solutions while here we ask whether a positive or
negative solution exists. It is a natural question to ask since  most of the classical metric spaces are endowed with a natural partial order.  \\

When we relax the contraction condition to the case of the Lipschitz constant equal to one, i.e., nonepxansive mapping,  the completeness of the distance will not be enough as it was done in the original case.  We need some geometric assumptions to be added.  But in general the Lipschitz condition on comparable elements is a weak assumption.  In particular, we do not have the continuity property.  Therefore one has to be very careful when dealing with such mappings.  In this paper, we use the iterative methods \cite{goebel-kirk1983} to prove the existence of fixed points of $G$-monotone nonexpansive mappings. \\

For more on metric fixed point theory, the reader may consult the books  \cite{goebel-kirk-book, khamsi-kirk}. This work was inspired by \cite{BA}.

\section{Graph Basic Definitions}
The terminology of graph theory instead of partial ordering gives a wider and clearer picture. In this section, we give the basic graph theory definitions and notations which will be used throughout.

A graph $G$ is an ordered pair $(V(G) ,E(G))$ where $V(G)$ is a set and $E(G)$ is a binary relation on $V(G)$. Elements of $E(G)$ are called edges. We are concerned here with directed graphs (digraphs) that have a loop at every vertex (i.e., $(a, a) \in E(G)$ for each $a\in V(G) $). Such digraphs are called reflexive. In this case $E(G) \subseteq V(G) \times V(G)$ corresponds to a reflexive (and symmetric) binary relation on $V$. Moreover, we may treat $G$ as a weighted graph by assigning to each edge the distance between its vertices. By $G^{-1}$ we denote the conversion of a graph $G$, i.e., the graph obtained from $G$ by reversing the direction of edges. Thus we have
$$E(G^{-1})=\{(y,x)\hspace{.01in}|\hspace{.01in} (x,y)\in E(G)\}.$$

A digraph $G$ is called an oriented graph if whenever $(u,v)\in E(G)$, then $(v,u)\notin E(G)$. The letter $\widetilde{G}$ denotes the undirected graph obtained from $G$ by ignoring the direction of edges. Actually, it will be more convenient for us to treat $\widetilde{G}$ as a directed
graph for which the set of its edges is symmetric. Under this convention,
$$E(\widetilde{G})=E(G)\cup E(G^{-1}).$$

Given a digraph $G$, a (di)path of $G$ is a sequence $a_0, a_1, . . . , a_{n}, \dots$ with
$(a_{i}, a_{i+1} )\in E(G)$ for each $i = 0, 1, 2, \dots$.  A finite path $(a_0, a_1, . . . , a_{n})$ is said to have length $n+1$, for $n \in \mathbb{N}$. A digraph is connected if there is a finite (di)path joining any two of its vertices and it is weakly connected if $\widetilde{G}$ is connected.\\

\begin{definition}
A digraph $G$ is transitive if
$$ (x,y)\in E(G)\hspace{.1in}\text{and}\hspace{.1in} (y,z)\in E(G) \Rightarrow (x,z)\in E(G) \hspace{.1in}\text{for all}\hspace{.1in} \ x,y,z\in V(G).$$
\end{definition}

\noindent Note that the transitivity of a graph $G$ does not necessarily imply that the absence of loops.  It is easy to come up with a transitive graph $G$ with loops.  Such graph will not be generated by a partial order.\\

Throughout this paper, $(X, \|.\|)$ will denote a Banach vector space.  It is well known that we have two topologies defined on $X$, mainly the strong topology and the weak topology.  For more on these topologies we refer to the book \cite{beauzamy}.

\begin{definition} Let $(X, \|.\|)$ be a Banach space.  An element $x$ is called a weak-cluster point of a sequence $(x_{n})_{n\in \mathbb{N}}$ in $X$, if there exists a subsequence $(x_{\phi(n)})_{n\in \mathbb{N}}$ such that $(x_{\phi(n)})_{n\in \mathbb{N}}$ converges weakly to $x$.  In this case, we will write
$$ weak-\lim_{n \rightarrow +\infty} x_{\phi(n)} = x.$$
\end{definition}

\noindent As Jachymski \cite{Jachymski} did, we introduce the following property:
\\ \\
Let $G$ be a reflexive digraph defined on $X$. We say that $E(G)$ has Property (P) if
\begin{itemize}
  \item[(P)] for any sequence $(x_n)_{n\in \mathbb{N}}$ in $X$ such that $(x_n, x_{n+1})\in E(G)$ for $n \geq 1$ and  $x$  is a weak-cluster point of $(x_n)_{n\in \mathbb{N}}$, there exists a subsequence $(x_{\phi(n)})_{n\in \mathbb{N}}$ which converges weakly to $x$ and $(x_{\phi(n)}, x)\in E(G)$, for every $n \geq 1$.
\end{itemize}

Note that property (P) is precisely Nieto et al. \cite{NR-L05} hypothesis relaxing continuity
assumption rephrased in terms of edges. Moreover, if $G$ is a reflexive transitive digraph defined on $X$, then the Property (P) implies the following property:
\begin{itemize}
  \item[(PT)] for any sequence $(x_n)_{n\in \mathbb{N}}$ in $X$ such that $(x_n, x_{n+1})\in E(G)$ for $n \geq 1$ and  $x$  is a weak-cluster point of $(x_n)_{n\in \mathbb{N}}$, we have $(x_{n}, x)\in E(G)$, for every $n \geq 1$.
\end{itemize}

In the sequel, we assume that $G$ is a reflexive digraph defined on $X$. Moreover,
we assume that $E(G)$ has property (P). The linear convexity structure of $X$ is assumed to be compatible with the graph structure in the following sense:

\begin{itemize}
\item[(CG)] If $(x,y)\in E(G)$ and  $(w,z) \in E(G)$, then
$$(\alpha\ x + (1-\alpha)\ w,\alpha\ y + (1-\alpha)\ z)\in E(G)$$
 for all $x,y,w,z\in X$ and $\alpha \in [0,1]$.
\end{itemize}

 Next we give the definition of $G$-monotone nonexpansive mappings.

\begin{definition}\label{monotone-nonexpansive}  Let $C$ be a nonempty subset of $X$ and $G$ be a reflexive digraph defined on $X$.  A mapping $T: C \rightarrow C$ is said to be
\begin{itemize}
\item[(1)]  $G$-monotone if $T$ is edge preserving, i.e., $(T(x),T(y))\in E(G)$ whenever $(x,y)\in E(G)$, for any $x, y \in C$.
\item[(2)]  $G$-monotone K-Lipschitzian, $K\in \mathbb{R}^{+}$, if  $T$ is $G$-monotone and
$$\|T(y)-T(x)\| \leq K\ \|y-x\|$$ for any $x,y\in C$ such that $(x,y)\in E(G)$.
\end{itemize}
If $K =1$, then $T$ is said to be a $G$-monotone nonexpansive mapping.  A fixed point of $T$ is any element $x \in C$ such that $T(x) =x$.  The set of all fixed points of $T$ is denoted by $Fix(T)$.
\end{definition}

\medskip

\begin{definition}\label{def-uc}  Let  $(X, \|.\|)$ be a Banach space.  Define the modulus of uniform convexity $\delta_X: (0, 2]\rightarrow [0,1]$ by
$$\delta_X(\varepsilon) = \inf\left\{1-\left\|\frac{x+y}{2}\right\|; \; \|x\| \leq 1,\ \|y\| \leq 1,\ and\ \|x-y\| \geq \varepsilon\right\}.$$
$X$ is said to be uniformly convex if $\delta_X(\varepsilon) >0$ for any $\varepsilon \in (0,2]$.
\end{definition}

Uniformly convex Banach spaces enjoy many nice geometric properties (see for example the reference \cite{beauzamy}).

\section{Iteration process for $G$-Monotone Nonexpansive Mappings}

In this section, we investigate the existence of fixed points of $G$-monotone nonexpansive mappings in $X$.  The main difficulty encountered in this setting has to do with the fact that the mappings do not have a good behavior on the entire sets.  They do have a good behavior only on connected points.  For this reason, our investigation is based on a constructive iteration  approach initiated by Krasnoselskii \cite{krasnoselskii} (see also \cite{ishikawa}).  Throughout this section we assume that $(X, \|.\|)$ is a Banach space and $G$ is a reflexive and transitive digraph defined on $X$. Moreover, we assume that $E(G)$ has property (P) and $G$-intervals are closed and convex. Recall that a $G$-interval is any of the subsets  $[a,\rightarrow) = \{x \in X; (a,x)\in E(G) \}$ and $(\leftarrow,b] = \{x \in X; (x,b)\in E(G)\}$, for any $a,b \in X$.\\

\begin{definition}\label{mann-iteration}\cite{ishikawa, krasnoselskii} Let $X$ and $G$ be as above. Let $C$ be a nonempty convex subset of $X$ and $T:C\rightarrow C$ be a $G$-monotone mapping.  Fix $x_1 \in C$. The Mann iteration process is the sequence $(x_n)$ defined by
\begin{equation} \label{mann-iteration-eqt}
x_{n+1} = t_n T(x_n) + (1-t_n) x_n,
\end{equation}
for any $n \geq 1$, where $(t_n) \subset [0,1].$
\end{definition}

\medskip

The following technical Lemmas will be useful to prove the main
result of this work.

\begin{lemma}\label{technical-lemma}  Let $X$ and $G$ be as above. Let $C$ be a nonempty convex subset of $X$ and $T:C\rightarrow C$ be a $G$-monotone mapping.  Fix $x_1 \in C$.  Consider the Mann iteration sequence $(x_n)$ defined by (\ref{mann-iteration-eqt}).
\begin{itemize}
\item[(i)] If  $(x_1 , T(x_1))\in E(G)$, then we have for any $n \geq 1$:
$$(x_n, x_{n+1}) \in E(G)\; and \; (x_{n+1}, T(x_{n})) \in E(G).$$
\item[(ii)] If  $(T(x_1), x_1)\in E(G)$, then we have for any $n \geq 1$:
 $$(x_{n+1}, x_{n}) \in E(G)\; and\; (T(x_{n}), x_{n+1}) \in E(G).$$
\end{itemize}
\end{lemma}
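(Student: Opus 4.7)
My plan is to proceed by induction on $n$ in both parts, relying on two ingredients: the compatibility axiom (CG) between the graph and the convex structure of $X$, and the transitivity of $G$. Part (ii) is a completely symmetric mirror of part (i), obtained by reversing every edge, so I will describe only (i) in detail.

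For the base case $n=1$, the hypothesis $(x_1, T(x_1)) \in E(G)$ together with the reflexive edges $(x_1,x_1)$ and $(T(x_1),T(x_1))$ provides enough material for two applications of (CG) with parameter $\alpha = 1-t_1$: combining $(x_1,x_1)$ with $(x_1, T(x_1))$ yields $(x_1, x_2) \in E(G)$, while combining $(x_1, T(x_1))$ with $(T(x_1), T(x_1))$ yields $(x_2, T(x_1)) \in E(G)$. Each is a routine arithmetic check against the definition $x_2 = t_1 T(x_1) + (1-t_1) x_1$. For the inductive step, assume $(x_n, x_{n+1}) \in E(G)$ and $(x_{n+1}, T(x_n)) \in E(G)$. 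Because $T$ is $G$-monotone, the first of these gives $(T(x_n), T(x_{n+1})) \in E(G)$; transitivity of $G$ then chains $(x_{n+1}, T(x_n))$ with $(T(x_n), T(x_{n+1}))$ to produce the pivotal edge $(x_{n+1}, T(x_{n+1})) \in E(G)$. With this pivot in hand, the base-case argument applies verbatim (with $x_1$ replaced by $x_{n+1}$) and delivers $(x_{n+1}, x_{n+2}) \in E(G)$ together with $(x_{n+2}, T(x_{n+1})) \in E(G)$, closing the induction.

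The only step that genuinely requires care is the construction of the pivot $(x_{n+1}, T(x_{n+1}))$; this is the sole place where transitivity is invoked, and without it the induction has no way to propagate from stage $n$ to stage $n+1$. Everything else is bookkeeping with convex combinations via (CG). For part (ii), the same scheme runs with each edge $(a,b)$ replaced by its reverse $(b,a)$; the pivot becomes $(T(x_{n+1}), x_{n+1}) \in E(G)$, obtained from $(x_{n+1}, x_n)$ by $G$-monotonicity and then transitivity applied to $(T(x_{n+1}), T(x_n))$ and $(T(x_n), x_{n+1})$.
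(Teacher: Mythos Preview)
Your proof is correct and follows essentially the same route as the paper: induction on $n$, with the base case handled by two applications of (CG) using the reflexive loops, and the inductive step driven by the pivot edge $(x_{n+1},T(x_{n+1}))\in E(G)$ obtained via $G$-monotonicity followed by transitivity. The paper writes out the (CG) applications in the inductive step explicitly rather than invoking the base-case argument ``verbatim with $x_1$ replaced by $x_{n+1}$,'' but the content is identical.
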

\begin{proof}  We will prove (i). The proof of (ii) is similar and will be omitted.
As $(x_1,T(x_1))\in E(G)$ and $(x_1,x_1) \in E(G)$, we have by property (CG)
$$\Big((1-t_{1}) x_1 + t_{1} x_{1},(1-t_{1}) x_{1} + t_{1} T(x_{1})\Big)\in E(G),$$
i.e., $(x_1,x_2)\in E(G)$.  The same argument will imply
$$\Big((1-t_{1}) x_1 + t_{1} T(x_{1}),(1-t_{1}) T(x_{1}) + t_{1} T(x_{1})\Big)\in E(G),$$
i.e., $(x_2,T(x_1))\in E(G)$.  Now assume that $(x_{n-1},x_n)\in E(G)$ and $(x_{n},T(x_{n-1}))\in E(G)$, for $n > 1$. Since $T$ is $G$-monotone, we have $(T(x_{n-1}),T(x_{n}))\in E(G)$. By transitivity of $G$, we get $(x_{n},T(x_{n}))\in E(G)$.  Hence by using property (CG), we obtain
$$\Big((1-\lambda) x_{n}+ \lambda T(x_{n}),(1-\lambda) T(x_{n})+ \lambda T(x_{n})\Big)\in E(G),$$
holds for any $\lambda\in [0,1]$, i.e., $(x_{n+1},T(x_{n}))\in E(G)$. Using the same argument, we get
$$\Big((1-\lambda) x_{n}+ \lambda x_{n},(1-\lambda) x_{n}+ \lambda T(x_{n})\Big)\in E(G),$$
holds for any $\lambda\in [0,1]$, i.e., $(x_{n},x_{n+1})\in E(G)$.  By induction, we have  $$(x_n, x_{n+1}) \in E(G)\; and \; (x_{n+1}, T(x_{n})) \in E(G).$$
for all $n \geq 1$.
\end{proof}

\begin{lemma}\label{lemma1}  Let $X$ and $G$ be as above. Let $C$ be a nonempty, closed, and convex subset of $X$ and  $T:C\rightarrow C$ be a $G$-monotone nonexpansive mapping. Let $\omega \in Fix(T)$. Let $x_1\in C$ be such that $(x_1,\omega)\in E(G)$. Let $(x_n)$ be the  Mann iteration sequence defined by (\ref{mann-iteration}).  Then we have $(x_n,\omega)\in E(G)$ for any $n\geq 1$ and $\displaystyle \lim_{n \rightarrow \infty} \|x_n-\omega\|$ exists.
\end{lemma}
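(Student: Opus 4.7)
The plan is to prove the two conclusions in order: first that $(x_n,\omega)\in E(G)$ for all $n\geq 1$ by induction, and then that $\|x_n-\omega\|$ is non-increasing, which gives existence of the limit.

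\medskip

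For the edge claim, I proceed by induction on $n$. The base case $n=1$ is given. Assume $(x_n,\omega)\in E(G)$. Since $T$ is $G$-monotone and $\omega$ is a fixed point, $(T(x_n),T(\omega))=(T(x_n),\omega)\in E(G)$. Applying property (CG) to the pairs $(x_n,\omega)\in E(G)$ and $(T(x_n),\omega)\in E(G)$ with parameter $t_n\in[0,1]$ yields
\[
\bigl((1-t_n)x_n+t_n T(x_n),\; (1-t_n)\omega+t_n\omega\bigr)\in E(G),
\]
i.e., $(x_{n+1},\omega)\in E(G)$, completing the induction.

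\medskip

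For the limit claim, I exploit the just-proved comparability to invoke the nonexpansive inequality. Because $(x_n,\omega)\in E(G)$ and $T$ is $G$-monotone nonexpansive, we have $\|T(x_n)-T(\omega)\|\leq \|x_n-\omega\|$. Then, using the triangle inequality on the convex combination defining $x_{n+1}$,
\[
\|x_{n+1}-\omega\|=\bigl\|t_n(T(x_n)-\omega)+(1-t_n)(x_n-\omega)\bigr\|
\leq t_n\|T(x_n)-\omega\|+(1-t_n)\|x_n-\omega\|\leq \|x_n-\omega\|.
\]
Hence $(\|x_n-\omega\|)_{n\geq 1}$ is a non-increasing sequence of non-negative real numbers, so it converges.

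\medskip

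I do not anticipate a real obstacle here; the only subtle point is being careful that the nonexpansive inequality is applied only where it is permitted, namely on comparable pairs. This is exactly why the edge assertion must be proved first: without $(x_n,\omega)\in E(G)$ there would be no guarantee that $\|T(x_n)-T(\omega)\|\leq \|x_n-\omega\|$. The argument uses property (CG), $G$-monotonicity, and the fact that $\omega$ is a fixed point; it does not require transitivity of $G$, property (P), closedness/convexity of $C$, nor any uniform convexity of $X$, all of which are reserved for later steps of the program.
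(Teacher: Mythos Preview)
Your proof is correct and essentially identical to the paper's own argument: both prove $(x_n,\omega)\in E(G)$ by induction using $G$-monotonicity of $T$ together with property (CG), and then derive monotonicity of $\|x_n-\omega\|$ from the $G$-nonexpansive inequality applied along the just-established edges. Your added remark that transitivity, property (P), and the topological hypotheses on $C$ and $X$ are not used here is accurate and worth noting.
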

\begin{proof}  Assume that $(x_1,\omega)\in E(G)$. Since $T$ is $G$-monotone, then we must have $(T(x_1), T(\omega))\in E(G)$. Since $T(\omega)=\omega$ we get $(x_1,\omega)\in E(G)$ and $(T(x_1),\omega) \in E(G)$. The property (CG) implies
$$\Big((1-\lambda) x_1 + \lambda T(x_{1}),(1-\lambda) \omega + \lambda \omega \Big)\in E(G),$$
for any $\lambda\in [0,1]$, which implies $(x_2,\omega)\in E(G)$.  By induction, we prove that $(x_n , \omega)\in E(G)$, for any $n \geq 1$. Since $T$ is $G$-monotone nonexpansive, we get
$$\|T(x_n)- \omega\| = \|T(x_n)- T(\omega)\| \leq \|x_n- \omega\|,$$
which implies
$$\begin{array}{lll}
\|x_{n+1}- \omega\| &\leq& t_n \|T(x_n)- \omega\| + (1-t_n) \|x_n- \omega\| \\
&&\\
&\leq & t_n \|x_n- \omega\| + (1-t_n) \|x_n- \omega\| = \|x_n-\omega\|,
\end{array}$$
for any $n \geq 1$.  This means that $(\|x_n-\omega\|)$ is a
decreasing sequence, which implies that $\lim\limits_{n
\rightarrow \infty} \|x_n-\omega\|$ exists.
\end{proof}

\medskip

In the general theory of nonexpansive mappings, the main property of the Mann iterative sequence is an approximate fixed point property.  Recall that $(x_n)$ is called an approximate fixed point sequence of the mapping $T$ if $\lim\limits_{n \rightarrow +\infty} \|x_n - T(x_n)\| = 0$.  We have a similar conclusion for
$G$-monotone nonexpansive mappings if we assume $X$ is uniformly convex.  Since the proof of the main result involves ultrafilters and ultrapowers of Banach spaces, let us give their definitions.  First, recall that an ultrafilter $\mathcal{U}$ over $\mathbb{N}$ is a nonempty family of subsets of $\mathbb{N}$ satisfying
\begin{itemize}
\item[(i)] ${\cal U}$ is closed under taking supersets, i.e.,
$A\in {\cal U}$ and $A\subseteq B  \Longrightarrow\ B\in {\cal
U}$;
\item[(ii)] ${\cal U}$ is closed under finite intersections, i.e.,
$A,\,B\in {\cal U}\ \Longrightarrow\ A\cap B\in {\cal U}$;
\item[(iii)] for every $A\subseteq \mathbb{N}$ precisely one of $A$ or $\mathbb{N}\backslash
A$ is in $\mathcal{U}$.
\end{itemize}

\medskip
\noindent For a Hausdorff topological space $(\Omega ,{\cal T})$, an ultrafilter ${\cal U}$ over $\mathbb{N}$ and $(x_n)_{n\in \mathbb{N}}\subseteq \Omega$, we say
$$\lim_{\cal U}\ x_n\ =\ x_0$$
if for every neighborhood $W$ of $x_0$ we have $\{n \in \mathbb{N}: x_n\in W\}\ \in\ {\cal U}$.  Such limit is unique when it exists.  It is well known that if $(\Omega ,{\cal T})$ is compact, then for any sequence $(x_n)_{n\in \mathbb{N}}\subseteq \Omega$ and any ultrafilter ${\cal U}$ over $\mathbb{N}$, the limit $\lim\limits_{\cal U}\ x_n $ exists \cite{beauzamy}.  Next, we give the definition of the ultrapower of a Banach space.

\medskip
\begin{definition}\cite{beauzamy}  Let $(X, \|.\|)$ be a Banach space and $\cal U$ an ultrafilter over $\mathbb{N}$.  Consider the Banach space
$$\ell_\infty (X)\ =\ \{(x_n)_{n\in \mathbb{N}}, \|(x_n)\|_\infty = \sup_{n\in \mathbb{N}} \ \|x_n\| < \infty \}.$$
Then $N_{\cal U}(X)\ =\ \{(x_n)_{n\in \mathbb{N}}\in \ell_\infty (X); \lim_{\cal U}\ \|x_n\| = 0\}$ is a closed linear subspace of $\ell_\infty (X)$.  The ultrapower of $X$ over ${\cal U}$ is defined to
be the Banach space quotient
$$(X)_{\cal U}\ =\ {\ell_\infty (X) / N_{\cal U}(X)},$$
with elements denoted by $(x_n)_{\cal U}$, where $(x_n)$ is a representative of the equivalence class.  The quotient norm is canonically given by
$$\|(x_n)_{\cal U}\|\ =\ \lim_{\cal U} \|x_n\|.$$
\end{definition}

\medskip\medskip
Now we are ready to state our first result.

\begin{theorem}\label{afps-montone-uc} Let $X$ and $G$ be as above. Let $C$ be a nonempty, closed, convex and bounded subset of $X$.  Let $T:C\rightarrow C$ be a $G$-monotone nonexpansive mapping. Assume $X$ is uniformly convex, and there exist $\omega \in Fix(T)$ and $x_1 \in C$  such that $(x_1,\omega)\in E(G)$.  Then we have
$$\lim_{n \rightarrow \infty} \|x_n- T(x_n)\| = 0,$$
where $(x_n)$ is the Mann iterative sequence generated by
(\ref{mann-iteration-eqt}) which starts at $x_1$, with $t_n \in
[a,b]$, for some $a > 0$ and $b < 1$.
\end{theorem}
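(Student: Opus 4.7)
The plan is to use the ultrapower machinery set up just before the statement in order to reduce the claim to a strict convexity argument in the ultrapower of $X$. First, I would invoke Lemma \ref{lemma1}: from $(x_1,\omega)\in E(G)$ we get $(x_n,\omega)\in E(G)$ for every $n\geq 1$, and $r:=\lim_{n\to\infty}\|x_n-\omega\|$ exists. If $r=0$, then $\|T(x_n)-\omega\|\leq \|x_n-\omega\|\to 0$ by $G$-monotone nonexpansiveness along the edge $(x_n,\omega)$, and the triangle inequality gives $\|x_n-T(x_n)\|\to 0$. So I may assume $r>0$.

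Fix a non-principal ultrafilter $\mathcal{U}$ over $\mathbb{N}$. Because $(t_n)\subset[a,b]$ is bounded, the limit $t:=\lim_{\mathcal{U}}t_n$ exists and lies in $[a,b]\subset(0,1)$. Set, in the ultrapower $(X)_{\mathcal U}$,
$$\tilde x=(x_n)_{\mathcal U},\qquad \tilde y=(T(x_n))_{\mathcal U},\qquad \tilde\omega=(\omega)_{\mathcal U};$$
these elements are well-defined because $C$ is bounded. By Lemma \ref{lemma1} and the definition of the quotient norm,
$$\|\tilde x-\tilde\omega\|=\lim_{\mathcal U}\|x_n-\omega\|=r,\qquad \|\tilde y-\tilde\omega\|=\lim_{\mathcal U}\|T(x_n)-\omega\|\leq r.$$
Using $\sup_n\|T(x_n)-x_n\|<\infty$ and $\lim_{\mathcal U}|t_n-t|=0$, the Mann recursion rewrites in the ultrapower as $(x_{n+1})_{\mathcal U}=t\,\tilde y+(1-t)\,\tilde x$, and since $\lim_{\mathcal U}\|x_{n+1}-\omega\|=r$ as well,
$$\|t(\tilde y-\tilde\omega)+(1-t)(\tilde x-\tilde\omega)\|=r.$$

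Now I would appeal to the classical fact that the ultrapower of a uniformly convex Banach space is again uniformly convex (cf.\ \cite{beauzamy}), hence strictly convex. Combined with $\|\tilde y-\tilde\omega\|\leq r$, $\|\tilde x-\tilde\omega\|=r$ and $t\in(0,1)$, the displayed equality forces first $\|\tilde y-\tilde\omega\|=r$ (otherwise the convex combination would have norm strictly less than $r$) and then $\tilde y-\tilde\omega=\tilde x-\tilde\omega$ by strict convexity. Therefore $\lim_{\mathcal U}\|T(x_n)-x_n\|=0$. Since $\mathcal U$ was an arbitrary non-principal ultrafilter on $\mathbb N$ and the real sequence $(\|T(x_n)-x_n\|)$ is bounded, this gives $\lim_{n\to\infty}\|T(x_n)-x_n\|=0$.

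The main obstacle is the passage to the ultrapower and the verification of the ``rigid'' step: showing that $\|\tilde y-\tilde\omega\|\leq r$, $\|\tilde x-\tilde\omega\|=r$ together with $\|t(\tilde y-\tilde\omega)+(1-t)(\tilde x-\tilde\omega)\|=r$ actually force $\tilde x=\tilde y$. This is the only place where uniform convexity (rather than mere reflexivity or strict convexity of $X$ itself) is used, and it is also the reason the hypothesis $t_n\in[a,b]$ with $0<a\leq b<1$ cannot be dropped: if $t$ could collapse to $0$ or $1$ the convex-combination identity would be vacuous.
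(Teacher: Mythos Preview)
Your argument is correct and follows essentially the same route as the paper: invoke Lemma~\ref{lemma1}, pass to a non-trivial ultrafilter, and use strict convexity of the ultrapower $(X)_{\mathcal U}$ to force $\tilde x=\tilde y$, then conclude by arbitrariness of $\mathcal U$. The only cosmetic differences are that the paper first establishes $\lim_{\mathcal U}\|T(x_n)-\omega\|=R$ via the scalar inequality $R\leq t\lim_{\mathcal U}\|T(x_n)-\omega\|+(1-t)R\leq R$ before moving to the ultrapower, whereas you derive $\|\tilde y-\tilde\omega\|=r$ inside the ultrapower from the convex-combination identity; both are equivalent.
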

\begin{proof} Let $\omega \in Fix(T)$ and $x_1 \in C$ such that $(x_1,\omega)\in E(G)$.  Using Lemma \ref{lemma1}, we conclude that $\displaystyle \lim_{n \rightarrow \infty} \|x_n- \omega\|$ exists.  Set $R = \displaystyle \lim_{n \rightarrow \infty} \|x_n- \omega\|$.   Moreover we have
$$\limsup_{n \rightarrow \infty} \|T(x_n)- \omega\| = \limsup_{n \rightarrow \infty}  \|T(x_n)- T(\omega)\| \leq \limsup_{n \rightarrow \infty}   \ \|x_n- \omega\| = R,$$
since $(x_n,\omega)\in E(G)$, for any $n \geq 1$, and $T$ is $G$-monotone nonepxansive. Without loss of any generality, we may assume $R > 0$.   On
the other hand, we have $$\|x_{n+1}- \omega\| \leq t_n \|T(x_n)-
\omega\| + (1-t_n) \|x_n- \omega\| \leq \|x_n- \omega\|,$$ for any
$n \geq 1$.  Let $\cal U$ be a non-trivial ultrafilter over
$\mathbb{N}$.  Then $\displaystyle \lim_{\cal U} t_n = t \in
[a,b]$. Hence
$$R =  \lim_{\cal U} \|x_{n+1}- \omega\| \leq t\  \lim_{\cal U} \|T(x_n)- \omega\| + (1-t) R \leq R.$$
Since $t \neq 0$, we get $\displaystyle \lim_{\cal U}  \|T(x_n)-
\omega\| = R$.  Consider the ultrapower $(X)_{\cal U}$ of $X$ (see
\cite{beauzamy}).  Set $\tilde{x} = (x_n)_{\cal U}$,
$\tilde{y} = (T(x_n))_{\cal U}$ and $\tilde{\omega} =
(\omega)_{\cal U}$.
  Then we have
$$\|\tilde{x}-\tilde{\omega}\|_{\cal U} = \|\tilde{y}-\tilde{\omega}\|_{\cal U} = \|t \tilde{x}+ (1-t)\tilde{y} - \tilde{\omega}\|_{\cal U}.$$
Since $t \in (0,1)$ and $X$ is uniformly convex, then $(X)_{\cal
U}$ is strictly convex (see \cite{beauzamy}) which implies $\tilde{x} = \tilde{y}$,
i.e., $\lim\limits_{n, \cal U} \|x_n - T(x_n)\| = 0$.  Since $\cal
U$ was an arbitrary non trivial ultrafilter, we conclude that
$\lim\limits_{n \rightarrow \infty} \|x_n - T(x_n)\| = 0$, which
completes the proof of Theorem \ref{afps-montone-uc}.
\end{proof}

\medskip

The conclusion of Theorem \ref{afps-montone-uc} is strongly dependent on the assumption that a fixed point of $T$ exists which is connected to $x_1$.  In fact, we may relax such assumption and obtain a similar conclusion.  First, we will need the following Proposition from \cite{goebel-kirk1983}.

\begin{proposition}  \label{goebel-kirk} Let $(X, \|.\|)$ be a Banach space.  Let $(x_n)$ and $(y_n)$ be in $X$ and $(t_n) \subset [0,1)$, such that
\begin{enumerate}
\item[(i)] $x_{n+1} = (1-t_n) x_n + t_n y_n$,
\item[(ii)] $\|y_{n+1} - y_n\| \leq \|x_{n+1}-x_n\|$,
\end{enumerate}
for any $n \in \mathbb{N}$.  Then for any $i,n \geq 1$, we have
$$\begin{array}{lll}
\displaystyle \Big(1 + \sum_{s=i}^{i+n-1} t_s\Big) \|x_i - y_i\| &\leq \|y_{i+n} - x_i\| \\
&\; +\displaystyle  \prod_{s=i}^{i+n-1} (1-t_s)^{-1}\Big[\|y_i-x_i\| - \|y_{i+n}-x_{i+n}\|\Big].\\
\end{array}$$
\end{proposition}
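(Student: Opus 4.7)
The plan is to proceed by induction on $n \geq 1$ with $i$ held fixed. Setting $a_k = \|y_k - x_k\|$, condition (i) immediately gives $x_{k+1} - x_k = t_k(y_k - x_k)$, so $\|x_{k+1} - x_k\| = t_k a_k$, and then (ii) yields $\|y_{k+1} - y_k\| \leq t_k a_k$. Expanding $y_{k+1} - x_{k+1} = (y_{k+1} - y_k) + (1-t_k)(y_k - x_k)$ and applying the triangle inequality shows that $(a_k)$ is nonincreasing, so a telescoping estimate gives
$$\|y_{i+n} - y_i\| \leq \sum_{s=i}^{i+n-1} t_s a_s \leq a_i \sum_{s=i}^{i+n-1} t_s.$$

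The engine of the proof is the algebraic identity obtained by solving (i) for $x_i$, namely
$$(1-t_i)(y_{i+n} - x_i) = (y_{i+n} - x_{i+1}) - t_i(y_{i+n} - y_i).$$
Taking norms, the reverse triangle inequality combined with the telescoping estimate above yields the lower bound
$$(1-t_i)\|y_{i+n} - x_i\| \geq \|y_{i+n} - x_{i+1}\| - t_i a_i \sum_{s=i}^{i+n-1} t_s.$$
For the base case $n=1$, I would substitute $\|y_{i+1} - x_{i+1}\| = a_{i+1}$ and $\|y_{i+1} - y_i\| \leq t_i a_i$ directly into this bound; a short calculation shows the intermediate estimates collapse to an equality and reproduce exactly the claimed inequality at $n=1$.

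For the inductive step ($n \geq 2$), I apply the induction hypothesis at base index $i+1$ and length $n-1$ to bound $\|y_{i+n} - x_{i+1}\|$ from below, then substitute. Writing $\sigma = \sum_{s=i}^{i+n-1} t_s$ and $\pi = \prod_{s=i}^{i+n-1}(1-t_s)^{-1}$, and noting that $(1-t_i)\pi = \prod_{s=i+1}^{i+n-1}(1-t_s)^{-1}$ together with $\sigma - t_i = \sum_{s=i+1}^{i+n-1} t_s$, the $a_{i+n}$ contributions cancel cleanly and the target inequality reduces to
$$\Bigl[\prod_{s=i+1}^{i+n-1}(1-t_s)^{-1} - \Bigl(1 + \sum_{s=i+1}^{i+n-1} t_s\Bigr)\Bigr](a_i - a_{i+1}) \geq 0,$$
which holds since $a_i \geq a_{i+1}$ and the elementary Weierstrass-type bound $\prod_s(1-u_s)^{-1} \geq 1 + \sum_s u_s$ for $u_s \in [0,1)$ is obtained by induction on the number of factors. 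The main obstacle I anticipate is the bookkeeping in this inductive step: the quantities $\sigma$, $\pi$, and the range of the $a$-indices all shift simultaneously when the base point moves from $i$ to $i+1$, and it is easy to mis-telescope; but once the algebra is set up carefully, the cancellation is clean and the residual factor is manifestly nonnegative.
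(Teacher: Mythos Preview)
The paper does not prove this proposition at all: it is quoted from Goebel--Kirk \cite{goebel-kirk1983} and used as a black box. So there is no ``paper's own proof'' to compare against; your proposal supplies an argument the paper omits.

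Your argument is correct. The decomposition $y_{k+1}-x_{k+1}=(y_{k+1}-y_k)+(1-t_k)(y_k-x_k)$ gives the monotonicity of $a_k$, the identity $(1-t_i)(y_{i+n}-x_i)=(y_{i+n}-x_{i+1})-t_i(y_{i+n}-y_i)$ is the right engine, and I checked that after substituting the induction hypothesis at $(i+1,n-1)$ and the telescoping bound $\|y_{i+n}-y_i\|\le a_i\sum_{s=i}^{i+n-1}t_s$, the $a_{i+n}$ terms cancel and the residual is exactly
\[
\Bigl[\prod_{s=i+1}^{i+n-1}(1-t_s)^{-1}-\Bigl(1+\sum_{s=i+1}^{i+n-1}t_s\Bigr)\Bigr](a_i-a_{i+1})\ge 0,
\]
which follows from $a_i\ge a_{i+1}$ and the Weierstrass bound. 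The base case $n=1$ also checks out: the right-hand side collapses to $(1-t_i)^{-1}(1-t_i^2)a_i=(1+t_i)a_i$.

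One small wording fix: you say the induction is ``on $n$ with $i$ held fixed,'' but in the step you invoke the hypothesis at base index $i+1$. So the induction is really on $n$, with the statement asserted for \emph{all} $i$ simultaneously; this is of course a valid scheme, just phrase it that way. Otherwise the bookkeeping you worried about does work cleanly, and this is essentially the standard Goebel--Kirk/Ishikawa argument.
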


\medskip
The following technical lemma is crucial to the proof of our second result.

\begin{lemma}\label{goebel-kirk1983}  Let $X$ and $G$ be as above. Let $C$ be a nonempty convex subset of $X$ and $T:C\rightarrow C$ be a $G$-monotone nonexpansive mapping. Let $x_1 \in C$ be such that $(x_1,T(x_1))\in E(\widetilde{G})$.  Let $(x_n)$ be the Mann iterative sequence defined by (\ref{mann-iteration}) such that $(t_n) \subset [0,1)$. Then for any $i,n \geq 1$, we have
$$\begin{array}{lll}
\displaystyle \Big(1 + \sum_{s=i}^{i+n-1} t_s\Big) \|x_i - T(x_i)\| &\leq \|T(x_{i+n}) - x_i\| \\
&\; +\displaystyle  \prod_{s=i}^{i+n-1} (1-t_s)^{-1}\Big[\|T(x_i)-x_i\| - \|T(x_{i+n})-x_{i+n}\|\Big].\\
\end{array}$$
\end{lemma}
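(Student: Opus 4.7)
The plan is to reduce the lemma to Proposition \ref{goebel-kirk} by taking $y_n := T(x_n)$. Hypothesis (i) of that proposition, namely $x_{n+1} = (1-t_n)x_n + t_n y_n$, is then precisely the definition of the Mann iteration, so nothing needs to be checked there. Everything is therefore concentrated in verifying hypothesis (ii): the Lipschitz-type inequality
\[
\|T(x_{n+1}) - T(x_n)\| \leq \|x_{n+1} - x_n\|,
\]
for every $n \geq 1$.

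To verify (ii), I would exploit the assumption $(x_1, T(x_1)) \in E(\widetilde{G})$ by splitting into two symmetric cases. If $(x_1, T(x_1)) \in E(G)$, then Lemma \ref{technical-lemma}(i) applies and yields $(x_n, x_{n+1}) \in E(G)$ for every $n \geq 1$. Since $T$ is $G$-monotone, this gives $(T(x_n), T(x_{n+1})) \in E(G)$, and then the $G$-monotone nonexpansive property of $T$ directly delivers the desired inequality. In the opposite case $(T(x_1), x_1) \in E(G)$, I would instead invoke Lemma \ref{technical-lemma}(ii) to obtain $(x_{n+1}, x_n) \in E(G)$ for all $n \geq 1$; $G$-monotonicity of $T$ and the nonexpansive bound on comparable pairs again give the same conclusion. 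Either way, (ii) holds for all $n$.

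With both hypotheses of Proposition \ref{goebel-kirk} in place for the sequences $(x_n)$ and $(y_n) = (T(x_n))$, the conclusion of that proposition reads exactly
\[
\Bigl(1 + \sum_{s=i}^{i+n-1} t_s\Bigr)\|x_i - T(x_i)\| \leq \|T(x_{i+n}) - x_i\| + \prod_{s=i}^{i+n-1}(1-t_s)^{-1}\bigl[\|T(x_i)-x_i\| - \|T(x_{i+n})-x_{i+n}\|\bigr],
\]
for all $i, n \geq 1$, which is the statement of the lemma.

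I do not foresee a genuine obstacle here: the only delicate point is the bookkeeping between the two orientations in $E(\widetilde{G})$, which is handled transparently by the two branches of Lemma \ref{technical-lemma}. The condition $(t_n) \subset [0,1)$ is assumed precisely so that the factors $(1-t_s)^{-1}$ appearing on the right-hand side are well defined, matching the hypothesis of Proposition \ref{goebel-kirk}.
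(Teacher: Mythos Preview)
Your proposal is correct and follows essentially the same route as the paper: set $y_n = T(x_n)$, use Lemma \ref{technical-lemma} to get $(x_n,x_{n+1})\in E(G)$ (or the reverse edge), deduce $\|T(x_{n+1})-T(x_n)\|\le\|x_{n+1}-x_n\|$ from $G$-nonexpansiveness, and apply Proposition \ref{goebel-kirk}. The only cosmetic difference is that the paper dispatches the $E(\widetilde{G})$ assumption with a ``without loss of generality'' while you spell out both orientations explicitly.
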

\begin{proof} Without loss of any generality, we may assume $(x_1,T(x_1))\in E(G)$.  Lemma \ref{technical-lemma} implies that $(x_n, x_{n+1}) \in E(G)$ for any $n \geq 1$.  Since $T$ is a $G$-monotone nonexpansive mapping, we get
$$\|T(x_{n+1}) - T(x_n)\| \leq \|x_{n+1}-x_n\|,$$
and $(T(x_n), T(x_{n+1})) \in E(G)$, for any $n \geq 1$. Moreover from the definition of $(x_n)$ we have $\|x_{n+1} - x_n \| = t_n \|x_n - T(x_n)\|$, for any $n \geq 1$.  Therefore all the assumptions of Proposition \ref{goebel-kirk} are satisfied, where $(y_n) = (T(x_n))$, which implies the conclusion of Lemma \ref{goebel-kirk1983}.
\end{proof}

Using this lemma, we have a similar conclusion to Theorem
\ref{afps-montone-uc} with less stringent assumptions.  This
result is similar to the one found in \cite{ishikawa}.

\begin{theorem}\label{gk-afps-monotone}  Let $X$ and $G$ be as above. Let $C$ be a nonempty, closed, convex and bounded subset of $X$ and $T:C\rightarrow C$ be a $G$-monotone nonexpansive mapping. Let $x_1 \in C$ be such that $(x_1,T(x_1))\in E(\widetilde{G})$.  Let $(x_n)$ be the Mann iterative sequence defined by (\ref{mann-iteration}) such that $(t_n) \subset [a,b]$, with $a > 0$ and $b < 1$. Then we have $\lim\limits_{n \rightarrow +\infty} \|x_n - T(x_n)\| = 0$.
\end{theorem}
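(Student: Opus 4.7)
The plan is to exploit Lemma \ref{goebel-kirk1983} twice, in two very different regimes, and squeeze out $\lim_n \|x_n - T(x_n)\| = 0$ using only the boundedness of $C$. Write $d_n := \|x_n - T(x_n)\|$, and let $D = \mathrm{diam}(C) < \infty$, which is finite since $C$ is bounded. Since $(x_1, T(x_1)) \in E(\widetilde{G})$ we may, exactly as in the proof of Lemma \ref{goebel-kirk1983}, assume without loss of generality that $(x_1, T(x_1)) \in E(G)$.

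The first step is to show that the scalar sequence $(d_n)$ is nonincreasing. For this, apply Lemma \ref{goebel-kirk1983} with $n=1$ and arbitrary $i$, giving
$$(1+t_i)\,d_i \leq \|T(x_{i+1}) - x_i\| + (1-t_i)^{-1}\bigl[d_i - d_{i+1}\bigr].$$
Using the triangle inequality and the fact that $T$ is nonexpansive on the comparable pair $(x_i, x_{i+1})$ (established inside Lemma \ref{goebel-kirk1983}), one has $\|T(x_{i+1}) - x_i\| \leq \|T(x_{i+1}) - T(x_i)\| + d_i \leq t_i d_i + d_i = (1+t_i)d_i$. Plugging this bound in forces $(1-t_i)^{-1}[d_i - d_{i+1}] \geq 0$, so $d_{i+1} \leq d_i$. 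Hence $(d_n)$ decreases to some limit $d \geq 0$.

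The second step is to show $d = 0$ by contradiction. Suppose $d > 0$. Fix an arbitrary integer $n \geq 1$ and apply Lemma \ref{goebel-kirk1983} with this $n$, letting $i \to \infty$. The left-hand side satisfies
$$\Bigl(1 + \sum_{s=i}^{i+n-1} t_s\Bigr)\,d_i \geq (1+na)\,d_i \xrightarrow[i\to\infty]{} (1+na)\,d,$$
using $t_s \geq a$ and the convergence $d_i \to d$. On the right-hand side, $\|T(x_{i+n}) - x_i\| \leq D$ uniformly in $i$, and the product factor is controlled uniformly by
$$\prod_{s=i}^{i+n-1}(1-t_s)^{-1} \leq (1-b)^{-n},$$
while $d_i - d_{i+n} \to d - d = 0$ as $i \to \infty$. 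Thus the right-hand side has limsup at most $D + (1-b)^{-n}\cdot 0 = D$. Passing to the limit in the inequality yields $(1+na)\,d \leq D$ for every $n \geq 1$, which is absurd if $d > 0$ since the left side is unbounded in $n$. Therefore $d = 0$, i.e.\ $\lim_{n\to\infty}\|x_n - T(x_n)\| = 0$.

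The delicate point, and the one that would be the main obstacle if mishandled, is the order in which the two limits are taken. The product $\prod_{s=i}^{i+n-1}(1-t_s)^{-1}$ grows like $(1-b)^{-n}$, while the factor $d_i - d_{i+n}$ which it multiplies only shrinks under $i\to\infty$ for each fixed $n$. Sending $n \to \infty$ first would make the product explode uncontrollably; sending $i \to \infty$ first (for fixed $n$) kills $d_i - d_{i+n}$ while keeping $(1-b)^{-n}$ harmless, so the right-hand side stays bounded by $D$. Only afterwards does one send $n \to \infty$ on the left, and this is exactly where boundedness of $C$ is used. Everything else — monotonicity of $(d_n)$, the nonexpansive bound on comparable iterates, and the $G$-theoretic bookkeeping — is handled by Lemma \ref{goebel-kirk1983} and Lemma \ref{technical-lemma}.
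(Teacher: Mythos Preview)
Your proof is correct and follows essentially the same route as the paper: establish that $d_n=\|x_n-T(x_n)\|$ is nonincreasing, then apply Lemma~\ref{goebel-kirk1983}, send $i\to\infty$ first (so the bracketed term vanishes while $(1-b)^{-n}$ stays fixed), and finally let $n\to\infty$ against the diameter bound. The only cosmetic difference is in the monotonicity step: the paper proves $d_{n+1}\le d_n$ by a direct two-line triangle-inequality computation from the Mann recursion, whereas you extract it from the $n=1$ case of Lemma~\ref{goebel-kirk1983}; both arguments are valid and equally short.
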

\begin{proof}  First note that the sequence $(\|x_n-T(x_n)\|)$ is decreasing.  Indeed, we have
$$\begin{array}{lll}
\|x_{n+1} - T(x_{n+1})\| & = & \|(1-t_n)x_n + t_n T(x_n) - T(x_{n+1})\| \\
&=& \|(1-t_n)(x_n - T(x_n)) + T(x_n) - T(x_{n+1})\|\\
&\leq& (1-t_n)\|x_n - T(x_n)\| + \|T(x_n) - T(x_{n+1})\|\\
&\leq& (1-t_n)\|x_n - T(x_n)\| + \|x_n - x_{n+1}\|\\
&=& (1-t_n)\|x_n - T(x_n)\| + t_n \|x_n - T(x_n)\|\\
&=& \|x_n - T(x_n)\|,
\end{array}$$
for any $n \geq 1$.  Set $\lim\limits_{n \rightarrow +\infty}
\|x_n - T(x_n)\| = R$.  Next we note that we have:
$$\left\{\begin{array}{lll}
(1+n a) \leq 1 + \sum\limits_{s=i}^{i+n-1} t_s,\\
\prod\limits_{s=i}^{i+n-1} (1-t_s)^{-1} \leq (1-b)^{-n},\\
\|T(x_{i+n}) - x_i\| \leq \delta(C) = \sup \{\|x-y\|; x, y \in C\}.
\end{array}\right.$$
Hence the main inequality obtained in Lemma \ref{goebel-kirk1983} implies
$$(1+n a) \|x_i - T(x_i)\| \leq \delta(C) + (1-b)^{-n} \Big[\|T(x_i)-x_i\| - \|T(x_{i+n})-x_{i+n}\|\Big],$$
for any $i, n \geq 1$.  If we let $i \rightarrow +\infty$, we get $(1+n a) R \leq \delta(C)$, for any $n \geq 1$.  Hence
$$R \leq \frac{\delta(C)}{(1+na)},$$
holds for any $n \geq 1$.  Clearly this will imply $R =0$, i.e.,
$$\lim\limits_{n \rightarrow+\infty} \|x_n - T(x_n)\| = 0.$$
\end{proof}

\medskip

Before we state the main fixed point result of this work, let us
recall the definition of the weak-Opial condition.

\begin{definition}\cite{opial}  Let $(X, \|.\|)$ be a Banach space.  We will say that $X$ satisfies the weak-Opial condition if for any sequence $(y_n)$ which converges weakly to $y$, we have
$$\liminf_{n \rightarrow \infty} \|y_n - y\|   < \liminf_{n \rightarrow \infty} \|y_n - z\|,$$
for any $z \in X$ such that $y \neq z$.
\end{definition}

\medskip

\begin{theorem}\label{mann-fpp-uc} Let $(X, \|.\|)$ be a Banach space which satisfies the weak-Opial condition and $G$ be the directed reflexive and transitive digraph defined on $X$. Let $C$ be a nonempty weakly compact convex subset of $X$ and $T:C\rightarrow C$ be a $G$-monotone nonexpansive mapping. Assume there exists $x_1 \in C$ such that $(x_1,T(x_1))\in E(\widetilde{G})$.  Let $(x_n)$ be the Mann iterative sequence defined by (\ref{mann-iteration}) such that $(t_n) \subset [a,b]$, with $a > 0$ and $b < 1$.  Then $(x_n)$ is weakly convergent to $x$ which is a fixed point of $T$, i.e., $T(x) = x$.  Moreover $(x_1,x)\in E(G)$. \end{theorem}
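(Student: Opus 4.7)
The plan is to combine the approximate fixed point property of Theorem \ref{gk-afps-monotone} with the weak compactness of $C$ to extract a weak cluster point $x$, then use property (PT) together with the weak-Opial condition to show $x$ is a fixed point to which the entire sequence converges weakly.

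First I would reduce to the case $(x_1,T(x_1))\in E(G)$, since the case $(T(x_1),x_1)\in E(G)$ is handled symmetrically via part (ii) of Lemma \ref{technical-lemma}. Then Lemma \ref{technical-lemma}(i) gives $(x_n,x_{n+1})\in E(G)$ for every $n\geq 1$, so $(x_n)$ is an edge-preserving sequence along $G$. Theorem \ref{gk-afps-monotone} applies and yields $\lim_{n\to\infty}\|x_n-T(x_n)\|=0$. By weak compactness of $C$, there exists a subsequence $(x_{\phi(n)})$ converging weakly to some $x\in C$, so $x$ is a weak-cluster point of $(x_n)$.

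Since $G$ is reflexive and transitive and $E(G)$ has property (P), property (PT) holds for $(x_n)$, giving $(x_n,x)\in E(G)$ for every $n\geq 1$; in particular $(x_1,x)\in E(G)$. To see that $T(x)=x$, suppose for contradiction that $T(x)\neq x$. Using $(x_{\phi(n)},x)\in E(G)$ and the $G$-monotone nonexpansiveness of $T$, the triangle inequality gives
\[
\|x_{\phi(n)}-T(x)\|\leq \|x_{\phi(n)}-T(x_{\phi(n)})\|+\|T(x_{\phi(n)})-T(x)\|\leq \|x_{\phi(n)}-T(x_{\phi(n)})\|+\|x_{\phi(n)}-x\|,
\]
so passing to $\liminf$ and using $\|x_n-T(x_n)\|\to 0$ yields $\liminf_n\|x_{\phi(n)}-T(x)\|\leq \liminf_n\|x_{\phi(n)}-x\|$, contradicting the weak-Opial condition. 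Hence $x\in Fix(T)$.

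It remains to show that the full sequence $(x_n)$ converges weakly to $x$. Since $(x_1,x)\in E(G)$ and $x$ is a fixed point, Lemma \ref{lemma1} implies that $\lim_n\|x_n-x\|$ exists. If $y$ were another weak-cluster point of $(x_n)$, the same argument applied to a weakly convergent subsequence would produce a fixed point $y$ with $(x_n,y)\in E(G)$ and $\lim_n\|x_n-y\|$ existing; then applying weak-Opial to each of the two subsequences converging respectively to $x$ and $y$ and using that these liminfs coincide with the full limits forces $x=y$. Thus $(x_n)$ has a unique weak-cluster point and, by weak compactness of $C$, converges weakly to $x$. The main obstacle, which this outline handles cleanly, is transferring graph information from the iterates to the weak cluster point; this is exactly what property (PT) (derived from (P) and transitivity) is designed to do, and without it neither the nonexpansive estimate nor Lemma \ref{lemma1} would be available at the limit.
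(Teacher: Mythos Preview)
Your proposal is correct and follows essentially the same route as the paper's proof: reduce to $(x_1,T(x_1))\in E(G)$, invoke Theorem~\ref{gk-afps-monotone} for the approximate fixed point property, use property (PT) to connect every $x_n$ to any weak-cluster point, apply the $G$-nonexpansive estimate together with weak-Opial to show the cluster point is fixed, and then use Lemma~\ref{lemma1} plus weak-Opial again to prove uniqueness of the weak-cluster point. The only cosmetic difference is that you obtain $\liminf_n\|x_{\phi(n)}-T(x)\|\leq\liminf_n\|x_{\phi(n)}-x\|$ via the direct triangle inequality, whereas the paper uses the reverse triangle inequality to the same end.
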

\begin{proof}  Without loss of any generality, we may assume that $(x_1, T(x_1))\in E(G)$.  From the previous Lemmas, we know that
$$\lim\limits_{n \rightarrow +\infty} \|x_n - T(x_n)\| = 0.$$
Let $\omega_1$ be a weak-cluster point of $(x_n)$.  Then there exists $(x_{\varphi(n)})$ a subsequence of $(x_n)$ which converges weakly to $\omega_1$.  From the assumptions assumed, we know that $(x_n,\omega_1) \in E(G)$, for any $n \geq 1$.  Let us prove that $\omega_1$ is a fixed point of $T$.  Since
$$\Big|\|T(\omega_1) - x_{\varphi(n)}\| - \|T(\omega_1) - T(x_{\varphi(n)})\| \Big| \leq \|T(x_{\varphi(n)}) - x_{\varphi(n)}\|,$$
for any $n\in \mathbb{N}$, we conclude that
$$\liminf\limits_{n \rightarrow \infty} \|T(\omega_1) - x_{\varphi(n)}\| = \liminf\limits_{n \rightarrow \infty} \|T(\omega_1) - T(x_{\varphi(n)})\|.$$
Hence, we have
$$\liminf\limits_{n \rightarrow \infty} \|T(\omega_1) - x_{\varphi(n)}\| = \liminf\limits_{n \rightarrow \infty} \|T(\omega_1) - T(x_{\varphi(n)})\| \leq \liminf\limits_{n \rightarrow \infty} \|\omega_1 - x_{\varphi(n)}\|.$$
The weak-Opial property implies that $T(\omega_1) = \omega_1$.  Let $\omega_2$ be another weak-cluster point of $(x_n)$.  Again there exists a subsequence $(x_{\psi(n)})$ of $(x_n)$ which converges weakly to $\omega_2$.  The same argument above shows that $\omega_2$ is also a fixed point of $T$.  In this case, we have seen that $(\|x_n - \omega_i\|)$ are convergent for $i = 1, 2$.  Let us show that $\omega_1 = \omega_2$.  Assume not, i.e., $\omega_1 \neq \omega_2$.  Then we have
$$\begin{array}{lll}
\liminf\limits_{n \rightarrow \infty} \|\omega_2 - x_{\varphi(n)}\| &=& \lim\limits_{n \rightarrow \infty}\|\omega_2 - x_{n}\|\\
&=& \lim\limits_{n \rightarrow \infty}\|\omega_2 - x_{\psi(n)}\|\\
&< & \lim\limits_{n \rightarrow \infty}\|\omega_1 - x_{\psi(n)}\|\\
&=& \lim\limits_{n \rightarrow \infty}\|\omega_1 - x_{n}\|\\
&=& \lim\limits_{n \rightarrow \infty}\|\omega_1 - x_{\varphi(n)}\|\\
\end{array}$$
which is a contradiction with the fact that $(x_{\varphi(n)})$ converges weakly to $\omega_1$ and the weak-Opial property.  Therefore we must have $\omega_1 = \omega_2$.  This clearly implies that $(x_n)$ is weakly convergent and its weak limit is a fixed point of $T$.
\end{proof}

\noindent {\large{\textbf{Acknowledgement}}}
\medskip \\
\noindent The author would like to acknowledge the support provided by the Deanship of Scientific Research at King Fahd University of Petroleum \& Minerals for funding this work through project No. IP142-MATH-111.

\end{document}